\documentclass[11pt,a4paper]{article}
\usepackage[utf8]{inputenc}
\usepackage{amsmath,amssymb,amsthm}
\usepackage[a4paper,margin=27mm]{geometry}
\usepackage[expansion=false]{microtype}
\usepackage[hidelinks]{hyperref}
\DeclareTextSymbolDefault{\textdagger}{OMS}
\newcommand{\Prob}{\mathbb P}
\newcommand{\E}{\mathbb E}
\newcommand{\ind}{\mathbf 1}
\DeclareMathOperator{\erf}{erf}
\DeclareMathOperator{\erfc}{erfc}
\newtheorem{proposition}{Proposition}
\hypersetup{
  pdftitle={A clarification of Fermi's formula for the maximum of partial sums},
  pdfsubject={Survival probability and the diffusion limit},
  pdfauthor={Alessandro Lodi}
}
\title{\bfseries A clarification of Fermi's formula\\
for the maximum of partial sums}
\author{Alessandro Lodi\thanks{Independent Researcher.
  Email: \href{mailto:lodialessandro0@gmail.com}{\texttt{lodialessandro0@gmail.com}}.}}
\date{\small 11 October 2026}

\begin{document}
\setcounter{footnote}{1} 
\maketitle
\vspace{-1.2em}
\begin{abstract}
In Fermi's 1926 note, the expression assigned to the probability that
all partial sums remain below a threshold is the complementary
crossing probability. We obtain the correct probability from the same
diffusion equation and locate the inconsistency in the final
integration. We then state the limit for independent increments under
the Lindeberg condition. A counterexample with smooth densities shows
that small individual variances do not suffice to justify this limit.
The probabilistic results are classical; the purpose of this note is
to reconcile the stated event with the printed calculation and to
make the sufficient hypotheses for the discrete limit explicit.
\end{abstract}

\section{The event and the printed formula}

Let \(Y_1,\ldots,Y_n\) be independent random variables with mean zero
and finite variances \(\sigma_i^2\). Write
\begin{equation}\label{eq:defs}
 S_0=0,\qquad S_k=\sum_{i=1}^k Y_i,\qquad
 M_n=\max_{0\le k\le n}S_k,\qquad V_n=\sum_{i=1}^n\sigma_i^2.
\end{equation}
For a threshold \(a>0\), the problem stated by Fermi \cite{fermi},
p.~314, formula (8), is to determine
\begin{equation}\label{eq:event}
 Q_n(a)=\Prob(S_1\le a,\ldots,S_n\le a)=\Prob(M_n\le a).
\end{equation}
In the original notation, \(\sigma_i^2=K_i^2\). For \(V_n>0\),
formula (9), repeated on p.~318, assigns to this event the expression
\begin{equation}\label{eq:printed}
 \frac{2}{\sqrt{\pi}}\int_{a/\sqrt{2V_n}}^\infty e^{-z^2}\,dz
 =\erfc\!\left(\frac{a}{\sqrt{2V_n}}\right)
 =2\left[1-\Phi\!\left(\frac{a}{\sqrt{V_n}}\right)\right],
\end{equation}
where \(\Phi\) denotes the standard normal distribution function.

The event itself provides a first check. \(Q_n(a)\) is nondecreasing
in \(a\), whereas \eqref{eq:printed} is decreasing. At fixed \(n\),
\(Q_n(a)\to1\) as \(a\to\infty\), whereas \eqref{eq:printed} tends
to zero. These facts do not depend on the diffusion approximation.
The printed expression has the behavior of the complementary event.

The correction follows from Fermi's construction. The minus sign in
his formula (19) is already correct. The printed pages do not
establish whether the inconsistency arose in composition or in the
calculation. A related modern study of Fermi's note is due to
Flandoli \cite{flandoli}; no claim of priority is made here.

\section{The density with absorption}

Let \(B_t\) be standard Brownian motion started at zero, and set
\[
 \tau_a=\inf\{t\ge0:B_t=a\},\qquad
 g_t(x)=\frac{1}{\sqrt{2\pi t}}\exp\!\left(-\frac{x^2}{2t}\right).
\]
The time \(t\) represents accumulated variance, as in Fermi's
substitution (17). The measure
\[
 \Prob(B_t\in dx,\ \tau_a>t)=q_a(t,x)\,dx,\qquad x<a,
\]
has mass less than one: it retains only paths that have not reached
the threshold.

Reflection after the first arrival at \(a\) preserves Brownian law
\cite{lalley}. For \(x<a\), it identifies the density of paths
that have reached the threshold and ended at \(x\) with the free
density at \(2a-x\). Hence
\begin{equation}\label{eq:images}
 q_a(t,x)=g_t(x)-g_t(2a-x),\qquad t>0,\quad x<a.
\end{equation}
This density is nonnegative, since
\((2a-x)^2-x^2=4a(a-x)>0\). It also satisfies
\begin{equation}\label{eq:heat}
 \partial_t q_a=\tfrac12\partial_{xx}q_a,\qquad
 q_a(t,a)=0,\qquad q_a(t,x)\longrightarrow0\quad(x\to-\infty),
\end{equation}
with initial datum \(\delta_0\), interpreted as weak convergence
of measures. These are the equation and boundary conditions in
Fermi's formulas (14)--(18), in modern notation.

Integrating \eqref{eq:images} gives
\begin{align}
 Q(a,t)
 &:=\Prob\!\left(\sup_{0\le s\le t}B_s\le a\right)
   =\int_{-\infty}^a q_a(t,x)\,dx \notag\\
 &=\Phi(a/\sqrt t)-[1-\Phi(a/\sqrt t)] \notag\\
 &=2\Phi(a/\sqrt t)-1
   =\erf\!\left(\frac{a}{\sqrt{2t}}\right). \label{eq:correct}
\end{align}
The distinction between \(\tau_a>t\) and a maximum at most \(a\)
does not affect this probability: the Brownian maximum has no atom
at \(a>0\). The crossing probability is therefore
\begin{equation}\label{eq:crossing}
 H(a,t)=1-Q(a,t)
 =\erfc\!\left(\frac{a}{\sqrt{2t}}\right).
\end{equation}

\section{Where the integration changes the event}

Let \(u_a(s)\) denote the first-passage density:
\begin{equation}\label{eq:hit}
 u_a(s)=\frac{a}{\sqrt{2\pi s^3}}
          \exp\!\left(-\frac{a^2}{2s}\right),\qquad s>0.
\end{equation}
Fermi's formula (19) then becomes
\begin{equation}\label{eq:convolution}
 q_a(t,x)=g_t(x)-\int_0^t u_a(s)\,g_{t-s}(x-a)\,ds,
 \qquad x<a.
\end{equation}
The second term removes paths that have already reached the
threshold. This decomposition follows from the strong Markov
property at \(\tau_a\).

For every \(s<t\), Gaussian symmetry gives
\(\int_{-\infty}^a g_{t-s}(x-a)\,dx=1/2\).
Integrating \eqref{eq:convolution}, with Tonelli's theorem applied
to its nonnegative integral term, yields
\begin{equation}\label{eq:integrate}
 \int_{-\infty}^a q_a(t,x)\,dx
 =\Phi(a/\sqrt t)-\frac12\int_0^t u_a(s)\,ds.
\end{equation}
The substitution \(z=a/\sqrt{2s}\) gives
\begin{equation}\label{eq:uint}
 \int_0^t u_a(s)\,ds
 =\frac{2}{\sqrt{\pi}}\int_{a/\sqrt{2t}}^\infty e^{-z^2}\,dz
 =2[1-\Phi(a/\sqrt t)].
\end{equation}
Substituting \eqref{eq:uint} into \eqref{eq:integrate} recovers
\eqref{eq:correct}. On p.~318, immediately after formula (21),
Fermi's printed calculation instead identifies the integral of the
surviving density with \eqref{eq:uint}. Formula (19) and that
printed integration cannot both be correct.

The same distinction appears in the mass balance:
\begin{equation}\label{eq:flux}
 \frac{d}{dt}Q(a,t)
 =\frac12\,\partial_xq_a(t,a)=-u_a(t),\qquad Q(a,0)=1.
\end{equation}
The remaining mass is one minus the absorbed mass.

\section{A precise limit for discrete sums}

The Brownian formula is exact for the continuous process. For
discrete sums, the limiting procedure must be specified.
For each \(n\), consider independent increments
\(Y_{n,1},\ldots,Y_{n,n}\) with mean zero, variances
\(\sigma_{n,k}^2\), and total variance
\(V_n=\sum_{k=1}^n\sigma_{n,k}^2>0\). Define
\(S_{n,k}\) and \(M_n\) as in \eqref{eq:defs}.

\begin{proposition}\label{prop:limit}
Suppose that, for every \(\varepsilon>0\), the Lindeberg condition holds:
\begin{equation}\label{eq:lindeberg}
 L_n(\varepsilon)
 :=\frac{1}{V_n}\sum_{k=1}^n
 \E\!\left[Y_{n,k}^2
 \ind_{\{|Y_{n,k}|>\varepsilon\sqrt{V_n}\}}\right]
 \longrightarrow0.
\end{equation}
If \(a_n\ge0\) and \(a_n/\sqrt{V_n}\to b\in[0,\infty)\), then
\begin{equation}\label{eq:limit}
 \Prob(M_n\le a_n)\longrightarrow 2\Phi(b)-1.
\end{equation}
Consequently,
\begin{equation}\label{eq:approx}
 \Prob(M_n\le a_n)
 =\erf\!\left(\frac{a_n}{\sqrt{2V_n}}\right)+o(1).
\end{equation}
\end{proposition}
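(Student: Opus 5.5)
We will use the functional central limit theorem for triangular arrays (Prokhorov/Lindeberg–Feller invariance principle), which is valid exactly under the Lindeberg condition \eqref{eq:lindeberg}. Set \(t_{n,k}=V_n^{-1}\sum_{i\le k}\sigma_{n,i}^2\), so that \(0=t_{n,0}\le t_{n,1}\le\cdots\le t_{n,n}=1\). Define the random element \(X_n\) of \(D[0,1]\) by \(X_n(t)=S_{n,k}/\sqrt{V_n}\) for \(t_{n,k}\le t<t_{n,k+1}\), with \(X_n(1)=S_{n,n}/\sqrt{V_n}\). The Lindeberg condition implies \(\max_k\sigma_{n,k}^2/V_n\to0\), and then the invariance principle gives \(X_n\Rightarrow B\) on \([0,1]\), with \(B\) the standard Brownian motion of Section~2. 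One could instead use the linearly interpolated polygon in \(C[0,1]\); its supremum coincides with \(M_n/\sqrt{V_n}\), because the maximum of a polygon is attained at a vertex. Either way, \(M_n/\sqrt{V_n}=\sup_{t\le1}X_n(t)\), since \(S_{n,0}=0\) is included in both.

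Next we would apply the continuous mapping theorem. The functional \(x\mapsto\sup_{0\le t\le1}x(t)\) is continuous on \(C[0,1]\) and continuous at continuous points of \(D[0,1]\) in the Skorokhod topology. Since \(B\) has continuous paths almost surely, this gives \(M_n/\sqrt{V_n}\Rightarrow\sup_{s\le1}B_s\). By \eqref{eq:correct} with \(t=1\), the limit has distribution function \(F(b)=2\Phi(b)-1\) for \(b\ge0\) and \(F(b)=0\) for \(b<0\). This \(F\) is continuous on all of \(\mathbb R\), including at \(b=0\), where \(F(0)=0\). This is exactly where the remark that the Brownian maximum has no atom is used.

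To pass from the fixed threshold \(b\) to the moving threshold \(b_n=a_n/\sqrt{V_n}\to b\), we would use the standard fact that weak convergence to a continuous distribution function \(F\) is uniform (Pólya's theorem). This gives
\[
\bigl|\Prob(M_n\le a_n)-F(b_n)\bigr|\le\sup_x\bigl|\Prob(M_n/\sqrt{V_n}\le x)-F(x)\bigr|\to0 .
\]
Continuity of \(F\) gives \(F(b_n)\to F(b)=2\Phi(b)-1\), which proves \eqref{eq:limit}. For \eqref{eq:approx}, note that \(\erf(a_n/\sqrt{2V_n})=2\Phi(b_n)-1=F(b_n)\) because \(a_n\ge0\). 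The same display then gives \eqref{eq:approx} directly, and we observe that this part does not even need convergence of \(b_n\). When \(b=0\) the same argument applies, and the conclusion is \(\Prob(M_n\le a_n)\to0\).

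The main obstacle is justifying the invariance principle in the triangular-array setting with unequal variances, which requires the random time change \(t_{n,k}\). We would handle this by citing Prokhorov's theorem, or Billingsley's Theorem on Lindeberg arrays. A self-contained alternative proceeds in two steps. First, establish convergence of finite-dimensional distributions via the Lindeberg–Feller CLT applied to the increments over the blocks \(\{k:t_{n,k}\in(s,t]\}\); the block variances converge because \(\max_k\sigma_{n,k}^2/V_n\to0\). Second, prove tightness, for instance through Kolmogorov/Ottaviani maximal inequalities for the block maxima. This is where the Lindeberg condition, rather than mere smallness of the individual variances, is needed, consistent with the counterexample announced in the abstract.
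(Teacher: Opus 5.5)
Your argument is correct and follows the paper's proof: the variance clock $t_{n,k}$, the functional Lindeberg (Prokhorov) invariance principle for the step or polygonal process, continuous mapping for the supremum, and continuity of the limit law $2\Phi(b)-1$ on $[0,\infty)$, including at $b=0$. Your one addition, P\'olya's uniform convergence to pass from $b$ to $b_n=a_n/\sqrt{V_n}$, makes explicit a step the paper leaves implicit and even gives \eqref{eq:approx} without assuming $b_n$ converges; two minor corrections are that $t_{n,k}$ is a deterministic clock, not a random one, and that in the paper's counterexample the one-dimensional CLT already fails, so the Lindeberg condition is needed for the finite-dimensional step and not only for tightness.
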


\begin{proof}
Parametrize the sums by accumulated variance:
\[
 t_{n,k}=\frac{1}{V_n}\sum_{j=1}^k\sigma_{n,j}^2,
 \qquad t_{n,0}=0,\qquad t_{n,n}=1.
\]
Increments with zero variance may be removed. For every
\(\varepsilon>0\),
\[
 \max_k\frac{\sigma_{n,k}^2}{V_n}
 \le\varepsilon^2+L_n(\varepsilon).
\]
Thus the largest gap between consecutive times tends to zero.
The functional Lindeberg theorem, on this deterministic variance
clock, gives convergence to \(B\) of the process that linearly
interpolates the points
\((t_{n,k},S_{n,k}/\sqrt{V_n})\); see \cite{durrett}, Section~8.2,
for the martingale formulation. Independence makes the conditional
variances deterministic and reduces the tail condition to
\eqref{eq:lindeberg}.

The maximum of the interpolation is \(M_n/\sqrt{V_n}\).
Since the maximum is a continuous functional on continuous paths
with the uniform norm, it converges in distribution to
\(\sup_{0\le t\le1}B_t\). The distribution function of this maximum
is continuous, including at zero, and is given by \eqref{eq:correct}
at \(t=1\), extended to \(a=0\). This proves \eqref{eq:limit}.
Continuity of \(\Phi\) then gives \eqref{eq:approx}.
\end{proof}

This result controls absolute error. When
\(a_n/\sqrt{V_n}\to0\), it does not by itself provide a small
relative error or an asymptotic equivalent for the discrete
probability.

For independent and identically distributed increments with mean
zero and variance \(\sigma^2\in(0,\infty)\), the finite second
moment implies the Lindeberg condition. At a fixed threshold \(a>0\),
\eqref{eq:limit} gives
\begin{equation}\label{eq:fixed}
 \lim_{n\to\infty}\Prob(M_n\le a)=0,\qquad
 \lim_{n\to\infty}\Prob(M_n>a)=1.
\end{equation}
The limit one at the end of Fermi's note therefore belongs to the
crossing probability.

\section{Why variance comparisons alone do not suffice}

Fermi assumes that no individual variance is appreciable compared
with their sum, and that the threshold is large compared with the
standard deviations of the individual increments. His footnote on
p.~315 also assumes the conditions needed to justify the diffusion
approximation, without stating them. Condition \eqref{eq:lindeberg}
makes one sufficient requirement explicit. The following example
shows why variance and threshold comparisons alone cannot replace it.

Let \(J_{n,k}\) be independent, with
\[
 \Prob(J_{n,k}=1)=\Prob(J_{n,k}=-1)=\frac{1}{2n},
 \qquad \Prob(J_{n,k}=0)=1-\frac1n,
\]
and let \(Z_{n,k}\) be independent standard normal variables,
independent also of the \(J_{n,k}\). Set
\[
 Y_{n,k}=J_{n,k}+\frac{Z_{n,k}}{n}.
\]
Each increment has a smooth density and mean zero. Moreover,
\begin{equation}\label{eq:examplevar}
 \sigma_{n,k}^2=\frac1n+\frac1{n^2},\qquad
 V_n=1+\frac1n,\qquad
 \frac{\max_k\sigma_{n,k}^2}{V_n}=\frac1n.
\end{equation}
For \(a=1/4\), we also have
\(a/\max_k\sigma_{n,k}\to\infty\).
Yet with probability \((1-1/n)^n\to e^{-1}\), all the
\(J_{n,k}\) vanish. On this event only the Gaussian noise remains,
with total variance \(1/n\). Kolmogorov's maximal inequality gives
\[
 \Prob\!\left(\max_{k\le n}
 \left|\sum_{j=1}^k Z_{n,j}/n\right|>a\right)
 \le\frac{1}{na^2}\longrightarrow0.
\]
By independence,
\begin{equation}\label{eq:counterexample}
 \liminf_{n\to\infty}\Prob(M_n\le1/4)
 \ge e^{-1}=0.367879\ldots
 >2\Phi(1/4)-1=0.197413\ldots.
\end{equation}
The Brownian limit fails. Rare jumps of unit size survive
normalization, and the Lindeberg condition is not satisfied.

\section{Discrete and continuous absorption}

For increments with densities \(f_k\), the discrete density of paths
still below the threshold obeys the exact recursion
\begin{equation}\label{eq:recursion}
 q_k(x)=\ind_{\{x\le a\}}\int_{-\infty}^a
                 q_{k-1}(y)f_k(x-y)\,dy,\qquad q_0=\delta_0.
\end{equation}
At the first step this means
\(q_1(x)=\ind_{\{x\le a\}}f_1(x)\), with the initial integral
understood against the measure \(\delta_0\).
The required probability is \(\int_{-\infty}^a q_n(x)\,dx\).
This recursion retains the distribution of the increments.
At a finite number of steps the density approaching the threshold
from below may be positive. The zero boundary value of the
continuous model is not an identity for the discrete recursion.

Under \eqref{eq:lindeberg}, on the variance clock, the discrete
process converges to Brownian motion and yields \eqref{eq:approx}.
Fermi's formulas (9) and (9) bis must therefore be complemented
when assigned to the event (8). The diffusion method retains its
meaning once surviving mass, absorbed mass, and the conditions for
the limit are distinguished.

\section*{Use of generative AI}
An OpenAI language model assisted with the mathematical reconstruction,
drafting, English revision, source checking, and preparation of the
numerical verification script. This assistance extended beyond
language editing. Responsibility for the scientific contents rests
with the human author.

\end{document}